\definecolor{hot}{RGB}{65,105,225} 
\numberwithin{equation}{section}
\newtheorem{theorem}{Theorem}
\newtheorem{proposition}[theorem]{Proposition}
\newtheorem{lemma}[theorem]{Lemma}
\newtheorem{corollary}[theorem]{Corollary}
\theoremstyle{definition}
\newtheorem{example}[theorem]{Example}
\numberwithin{theorem}{section}
\renewcommand{\P}{\mathbb{P}}
\newcommand{\R}{\mathbb{R}}
\newcommand{\Q}{\mathbb{Q}}
\newcommand{\C}{\mathbb{C} }
\newcommand{\newt}{{\mathrm{Newt}}}
\newcommand{\conv}{{\mathrm{Conv}}}
\newcommand{\mvol}{{\mathrm{MVol}}}
\newcommand{\vol}{{\mathrm{Vol}}}
\title[A polyhedral homotopy algorithm for computing critical points]{A polyhedral homotopy algorithm for computing critical points of polynomial programs}
\author[Lindberg]{Julia Lindberg}
\address{
Julia Lindberg \\
University of Texas-Austin \\ USA
}
\email{julia.lindberg@math.utexas.edu}
\urladdr{\url{https://sites.google.com/view/julialindberg/home}}
\author[Monin]{Leonid Monin}
\address{
Leonid Monin \\
EPFL, Switzerland
}
\email{leonid.monin@epfl.ch}
\urladdr{\url{http://www.math.toronto.edu/lmonin/}}
\author[Rose]{Kemal Rose}
\address{
Kemal Rose\\
MPI-MIS \\
 Germany
} 
\email{krose@mis.mpg.de}
\urladdr{\url{https://kemalrose.github.io/}}
\begin{document}

\begin{abstract}
In this paper we propose a method that uses Lagrange multipliers and numerical algebraic geometry to find all critical points, and therefore globally solve, polynomial optimization problems. We design a polyhedral homotopy algorithm that explicitly constructs an optimal start system, circumventing the typical bottleneck associated with polyhedral homotopy algorithms. The correctness of our algorithm follows from intersection theoretic computations of the algebraic degree of polynomial optimization programs and relies on explicitly solving the tropicalization of a corresponding Lagrange system. We present experiments that demonstrate the superiority of our algorithm over traditional homotopy continuation algorithms.
\end{abstract}

\maketitle

\section{Introduction}

Polynomial programming is a class of mathematical programming that seeks to minimize a polynomial objective function subject to polynomial constraints. 
These are optimization problems of the form
\begin{align}
    \min_{x \in \mathbb{R}^n} \ f_0(x)  \quad \text{subject to} \quad F(x) = 0, \tag{Opt} \label{eq:opt}
\end{align}
where $F(x) = \{f_1(x),\ldots,f_m(x)\}$ and $f_i(x) \in \mathbb{R}[x_1,\ldots,x_n]$ are polynomials. Throughout this paper we use the standard multi-index notation for polynomials. Namely, we denote 
\[f(x) = \sum_{\alpha \in \mathcal{A}} c_\alpha x^\alpha\] 
where $\mathcal{A} \subset \mathbb{N}^n$ is the monomial support of $f$ and for $\alpha \in \mathbb{N}^n$, $x^\alpha := x_1^{\alpha_1}\cdots x_n^{\alpha_n}$.

Polynomial programs have broad modelling power and therefore have naturally arisen in many applications including signal processing, combinatorial optimization, power systems engineering and more \cite{tan2001the,poljak1995a,molzahn2019a}. 
In general, these problems are NP hard to solve \cite{vavasis1990quadratic} but there exist many solution techniques and heuristics to tackle \eqref{eq:opt}. Some popular examples include the moment/SOS hierarchy \cite{parrilo2003semidefinite,lasserre2000global,wang2021tssos,invariants2022lindberg}, local methods \cite{boyd2004convex,potra2000interior} and the method of Lagrange multipliers \cite{bertsekas2014constrained}. This work proposes solving \eqref{eq:opt} by using the method of Lagrange multipliers along with techniques from numerical algebraic geometry.

The method of Lagrange multipliers works by taking a constrained optimization problem and lifting it to a higher dimensional space and then considering an unconstrained optimization problem. Given a problem of the form \eqref{eq:opt} we define its \emph{Lagrangian} as
\[L(x,\lambda) = f_0(x) - \sum_{j=1}^m \lambda_j f_j(x). \]
The corresponding \emph{Lagrange system} is then defined as $\mathcal{L}_{f_0,F} = \{\ell_1,\ldots,\ell_n,f_1,\ldots,f_m\}$ where
\[ \ell_i = \frac{\partial}{\partial x_i} L =  \frac{\partial}{\partial x_i} (f_0 - \sum_{j=1}^m \lambda_j f_j).\]

The main idea behind using Lagrange multipliers is that smooth critical points of \eqref{eq:opt} are zeroes of $\mathcal{L}_{f_0,F}$. Therefore, if we find all $(x,\lambda) \in \mathbb{R}^{n+m}$ that satisfy $\mathcal{L}_{f_0,F}(x,\lambda) = 0$, we will find all smooth local critical points, and therefore (so long as the variety of $F(x) = 0$ is smooth) the global optimum.

For fixed $f_0,F$ the number of complex solutions $\mathcal{L}_{f_0,F}= 0$ is called the \emph{algebraic degree} of \eqref{eq:opt}. For generic $f_0,F$ a formula for the algebraic degree is given in \cite[Theorem 2.2]{MR2507133} as
\begin{align}
d_1 \cdots d_m S_{n-m}(d_0-1,d_1-1,\ldots,d_m-1) \label{eq: alg deg} \end{align}
where $d_i = \deg(f_i)$ and   \[S_{r}(n_1, \ldots,n_k) = \sum_{i_1+\ldots+i_k =r} n_1^{i_1}\cdots n_k^{i_k}.\]

The algebraic degree has also been defined and studied for other classes of convex optimization problems in 
\cite{MR2496496}
and \cite{MR2546336}. When $f_0$ is the Euclidean distance function, i.e., $f_0 = \lVert x - u \rVert_2^2$ for some $u \in \mathbb{R}^n$, then the number of complex critical points to \eqref{eq:opt} is called the \emph{ED degree} of $F$. The ED degree was first defined in \cite{DraismaTheEDD}. Since then, other work has bounded the ED degree of a variety \cite{MR3451425}, studied the ED degree for real algebraic groups \cite{baaijensRealAlgGroups}, Fermat hypersurfaces \cite{leeFermat}, orthogonally invariant matrices \cite{drusvyatskiyOrthogonally}, smooth complex projective varieties \cite{aluffiEDComplex}, the multiview variety \cite{maximMultiview} and when $F$ consists of a single polynomial \cite{breiding2020euclidean}. 

Similarly, when $f_0$ is the likelihood function then the number of complex critical points of \eqref{eq:opt} is called the \emph{ML degree}. 
The ML degree was first defined in \cite{MR2230921, HostenSolving} and since then the relationship between ML degrees and Euler characteristics \cite{MR3103064}, Euler obstruction functions~\cite{MR3686780} and toric geometry \cite{MR3907355, MR4103774, lindberg2021the} has been extensively studied. Further, the ML degree of various statistical models has also been considered \cite{MR2988436,manivel,MR4219257,MR4196404}.

More recently, the algebraic degree of \eqref{eq:opt} has been considered when $f_0,\ldots,f_m$ are defined by sparse polynomials. In this case, the algebraic degree may be less than the bound given in \eqref{eq: alg deg}. The authors in \cite{ourpaper} showed that in some situations, the algebraic degree is equal to the mixed volume of the corresponding Lagrange system.
One corollary of this result, as well as the analogous results for the ML degree and Euclicdean distance degree in \cite{ourpaper, lindberg2021the,breiding2020euclidean}, is that if $f_0,F$ have generic coefficients, then \emph{polyhedral homotopy} algorithms are optimal for solving the corresponding Lagrange system in the sense that exactly one path is tracked for each complex solution of $\mathcal{L}_{f_0,F} =0$. A downside of polyhedral homotopy algorithms is that there is a bottleneck associated with computing a start system. 
The work in this paper makes progress in this regard by explicitly designing a polyhedral homotopy algorithm for \eqref{eq:opt} when $m = 1$, circumventing the standard bottle neck. We see this paper as the first step and inspiration for an exciting new line of research, namely explicitly constructing optimal homotopy algorithms for specific parameterized polynomial systems of equations.

The results of this paper are organized as follows. In \Cref{sec:poly homotopy}, we review the main idea behind polyhedral homotopy. In \Cref{sec:hyper} we explicitly construct a polyhedral homotopy algorithm for the case when there exists a single constraint. In \Cref{sec: hyp refined} we generalize this result to when this constraint is sparse. We present numerical results which show that our algorithm outperforms existing polyhedral homotopy solvers in \Cref{sec: numerics} and explicitly compute the algebraic degree of a certain multiaffine polynomial program in \Cref{sec: multi}.

\section{Polyhedral homotopy continuation} \label{sec:poly homotopy}

Homotopy continuation algorithms are a broad class of numerical algorithms used for finding all isolated solutions to a square system of polynomial equations. Specifically, suppose you have a square system of polynomial equations 
\[F(x) = \{f_1(x),\ldots,f_n(x) \} = 0\] where $f_i \in \mathbb{R}[x_1,\ldots,x_n]$ and the number of complex solutions to $F(x) = 0$ is finite. Homotopy continuation works by tracking solutions from an `easy' system  of polynomial equations (called the \emph{start system}) to the desired one (called the \emph{target system}). This is done by constructing a \emph{homotopy}, 
\[H(t;x) : [0,1] \times \C^n \longrightarrow \C^n,\] 
such that
\begin{enumerate}
    \item $H(0 ;x) = G(x)$ and $H(1;x) = F(x)$,
    \item the solutions to $G(x) = 0$ are isolated and easy to find
    \item $H$ has no singularities along the path $t \in [0,1)$ and
    \item $H$ is sufficient for $F$.
\end{enumerate}

Here we call a homotopy $H$ \emph{sufficient} for $F = H(1; x)$ if, 
by solving the ODE initial value problems $\frac{\partial H}{\partial t} + \frac{\partial H}{\partial x} \dot{x} = 0$
with initial values $\{ x \ : \ G(x) = 0\}$, 
all isolated solutions of $F(x) = 0$ can be obtained.

One example of a homotopy, known as a \textit{straight line homotopy}, is defined as a convex combination of the start and target systems:
\begin{align*}
    H(t;x) &= \gamma (1-t)G(x) + t F(x)
\end{align*}
where $\gamma \in \mathbb{C}$ is a generic constant. Choosing generic $\gamma$ ensures $H(x;t)$ is non-singular for $t \in [0,1)$.
Path tracking is typically done using standard predictor-corrector methods. For more information, see \cite{bertini-book, Sturmfels-CBMS}.

The main question when employing homotopy continuation techniques is how to select such an `easy' start system. If the target system roughly achieves the \emph{Bezout bound} then a \emph{total degree} start system is suitable. An example of this is
\[G(x) = \{x_1^{d_1} -1,\ldots,x_n^{d_n} -1 \} \]
where $\deg(f_i) = d_i$.

Often in applications, the target system is defined by sparse polynomial equations. In this case, the Bezout bound can be a strict upper bound on the total number of complex solutions so using a total degree start system leads to wasted computation. A celebrated result, known as the \emph{BKK bound}, gives an upper bound on the number of complex solutions in the torus to a sparse polynomial system. In order to state the BKK bound, we need a few preliminary definitions but recommend  \cite{mvoltext} for more details.

Given a polynomial $f = \sum_{\alpha \in \mathcal{A}} c_\alpha x^\alpha \in \mathbb{C}[x_1,\ldots,x_n]$ the \emph{Newton polytope} of $f$ is 
\[ \newt(f) = \conv \{\alpha \ : \alpha \in \mathcal{A}\}. \]
Given convex polytopes $P_1,\dots,P_n \subset \mathbb{R}^n$, consider the Minkowski sum 
$\mu_1 P_1 +\cdots +\mu_n P_n.$
A classic result shows that 
\[Q(\mu_1,\dots,\mu_n) = \vol(\mu_1 P_1 +\cdots +\mu_n P_n)\] 
is a homogeneous degree $n$ polynomial in $\mu_1,\dots, \mu_n$.
The \emph{mixed volume} of 
$P_1,\dots,P_n$ is the coefficient of $\mu_1 \cdots \mu_n$ of $Q$. We denote it as $\mvol(P_1,\ldots, P_n)$.

\begin{theorem}[BKK Bound \cite{bernshtein1979the,khovanskii1978newton,kouchnirenko1976polyedres}]\label{thm:BKK}
Let $F = \{ f_1,\dots,f_n \}$ be a  sparse polynomial system in
$\mathbb{C}[x_1,\dots,x_n]$ and let $P_1,\dots,P_n$ be their respective Newton polytopes. The number of isolated $\mathbb{C}^*$-solutions to $F=0$ is bounded above by $\mvol(P_1,\ldots,P_n)$. Moreover, if the coefficients of $F$ are general, then this bound is achieved with equality.
\end{theorem}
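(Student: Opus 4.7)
The strategy is to prove the upper bound and its generic attainment simultaneously by constructing a polyhedral homotopy deformation, in the spirit of Huber--Sturmfels, and then deduce the bound for arbitrary coefficients via upper semicontinuity.

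First, I would introduce a generic real lifting $\omega_i \colon \mathcal{A}_i \to \mathbb{R}$ for each support $\mathcal{A}_i = \mathrm{supp}(f_i)$ and form the one-parameter family
\[ H_i(x,t) = \sum_{\alpha \in \mathcal{A}_i} c_{i,\alpha}\, x^\alpha\, t^{\omega_i(\alpha)}, \qquad i = 1,\dots,n, \]
on $t \in (0,1]$ with target $H(x,1) = F(x)$. The lifted polytopes $\widehat{P}_i = \conv\{(\alpha, \omega_i(\alpha)) : \alpha \in \mathcal{A}_i\}$ determine, via their lower facets, a coherent mixed subdivision of $P_1 + \cdots + P_n$ into Minkowski sums of faces $F_1 + \cdots + F_n$ with $F_i \subseteq P_i$. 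For generic $\omega$ the subdivision is fine, and a cell is called \emph{mixed} precisely when $\dim F_i \ge 1$ for all $i$ (forcing $\sum_i \dim F_i = n$).

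Second, I would track solutions as $t \to 0$. For each mixed cell $C = F_1 + \cdots + F_n$ associated to an interior lower normal $(\nu, 1) \in \mathbb{Q}^{n+1}$, the substitution $x_j = t^{-\nu_j} y_j$ rescales the system so that the dominant $t^0$-terms form an initial system supported precisely on the vertices of the faces $F_i$. This initial system has exactly $\mvol(F_1,\dots,F_n)$ nondegenerate $\mathbb{C}^*$-solutions by Kushnirenko applied cellwise, each extending uniquely to a solution path of $H(x,t)=0$ by the implicit function theorem. Summing across mixed cells via the combinatorial identity $\sum_{\text{mixed } C} \mvol(F_1,\dots,F_n) = \mvol(P_1,\dots,P_n)$ produces $\mvol(P_1,\dots,P_n)$ disjoint paths, whose endpoints at $t = 1$ give all isolated $\mathbb{C}^*$-solutions of $F = 0$ with equality for generic coefficients.

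Finally, for arbitrary coefficients the upper bound follows from upper semicontinuity of the number of isolated solutions: the locus of coefficient tuples with more than $N = \mvol(P_1,\dots,P_n)$ isolated $\mathbb{C}^*$-solutions is a proper constructible subset of the coefficient space, so generic attainment forces the bound everywhere. The main obstacle is the tropical/asymptotic analysis of the second step: verifying that paths originating from non-mixed cells diverge to the boundary of $(\mathbb{C}^*)^n$, and that conversely no isolated $\mathbb{C}^*$-solution of $F = 0$ is missed by this construction. Equivalently, one must show that the tropicalization of the solution set meets $\mathbb{R}^n$ exactly in the normals to mixed cells, and this tropical matching--together with the careful choice of generic lifting that guarantees smoothness along $t \in (0,1)$--is the technical heart of the proof.
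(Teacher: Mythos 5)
The paper gives no proof of this theorem: it is quoted as a classical result of Bernstein, Kouchnirenko and Khovanskii, and the polyhedral-homotopy machinery is only recalled informally in Section~\ref{sec:poly homotopy}. So your proposal must stand on its own. Its first two steps are the standard Huber--Sturmfels argument and are essentially sound: a generic lifting induces a fine coherent mixed subdivision, each mixed cell contributes a binomial initial system with $\mvol(F_1,\dots,F_n)$ nondegenerate torus solutions, and the identity $\sum_{\text{mixed }C}\mvol(F_1,\dots,F_n)=\mvol(P_1,\dots,P_n)$ gives the generic count, provided one also proves (as you acknowledge but do not do) that every branch of the solution curve of $H(x,t)=0$ over the punctured disc has a Puiseux leading exponent dual to a lower facet, so that no isolated solution of the target is missed; this is \cite[Lemma 3.1]{huber1995a} and is the real content of the equality statement.

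The genuine gap is your final step. The number of isolated $\C^*$-solutions is \emph{not} an upper semicontinuous function of the coefficients (isolated points can appear in the degeneration of a positive-dimensional component, and can collide or escape the torus), and the sentence asserting that the locus of systems with more than $N$ isolated solutions is ``a proper constructible subset, so generic attainment forces the bound everywhere'' proves nothing: a proper constructible subset is in general nonempty, so you have not excluded systems exceeding the bound, and you have given no argument that this locus is proper in the first place. The correct deduction of the bound for arbitrary coefficients is a conservation-of-number argument: if $x_0$ is an isolated torus solution of a specific system $F$, perturb to a generic $F_\epsilon$ with the same supports; properness of the solution family over a small ball around $x_0$ (equivalently, a Rouch\'e/degree argument) shows $F_\epsilon$ has at least one torus solution near $x_0$, and distinct isolated solutions of $F$ acquire disjoint clusters, whence the count for $F$ is at most the generic count $\mvol(P_1,\dots,P_n)$. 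This distinction between the generic equality and the universal inequality is precisely the split between Parts A and B of Bernstein's theorem and cannot be absorbed into a genericity remark.
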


If the BKK bound is much less than the Bezout bound, a \emph{polyhedral} start system is a better choice since using a total degree start system will lead to wasted computation tracking homotopy paths that diverge to infinity. The downside of polyhedral homotopy is that the start system is more difficult to construct. This is not surprising since computing the mixed volume is $\#$P hard \cite{Khachiyan1993}. Even so, there is an algorithm that computes this start system \cite{huber1995a}. We briefly outline the idea behind polyhedral homotopy here but give \cite{huber1995a} as a more complete reference. 

Recall that $F = \{f_1,\ldots,f_n\}$, where $f_i = \sum_{\alpha \in \mathcal{A}_i}c_\alpha x^\alpha \in \mathbb{C}[x_1,\ldots,x_n]$. For each monomial, $\alpha \in \mathcal{A}_i$, we consider a \emph{lifting}, $w(\alpha)$, and the corresponding lifted system $F^w(x,t) = (f_1^w(x,t),\ldots,f_n^w(x,t))$ where
\begin{align}
 f_i(x,t) = \sum_{\alpha \in \mathcal{A}_i} c_\alpha x^\alpha t^{w(\alpha)}. \label{eq:lifted poly}
 \end{align}

Solutions to $F^w(x,t) = 0$ are algebraic functions in the parameter $t$. Such solutions can be written as
\[ x(t) = (x_1(t),\ldots,x_n(t)). \]

In a neighborhood of $t = 0$, each solution can be written as $x(t) = (x_1(t),\ldots,x_n(t))$ where
\[ x_i(t) = y_i t^{u_i} \  + \quad \text{higher order terms in } t  \]
where $y_i\neq 0$ is a constant and $u_i \in \mathbb{Q}$. Substituting this into \eqref{eq:lifted poly} we have
\[ f_i(x,t) = c_\alpha y^\alpha t^{u^T \alpha + w(\alpha)} \ + \quad \text{higher order terms in } t. \]

By \cite[Lemma 3.1]{huber1995a} We wish to find $u$ such that 
\[ \min_{u \in \mathbb{R}^n} \ \{ u^T \alpha + w(\alpha) \} \]
is achieved twice. For each solution $u$, the vector $(u,1)$ is an inner normal to one of the lower facets of the \emph{Cayley polytope} of $F$. Further more, each such solution, $u$, then induces a binomial polynomial system $\mathcal{B}_u$ which can be solved using Smith normal forms as well as a homotopy to track solutions from $\mathcal{B}_u(x) =0$ to $F(x)=0$. The sum of the number of solutions to $B_u(x) = 0$ for each solution $u$ is equal to the BKK bound of $F(x)$. Therefore, if the coefficients of $F$ are generic with respect to its monomial support, then polyhedral homotopy will track one homotopy path for each solution to $F(x) = 0$.
We illustrate this on a small example.

\begin{example}\label{ex:1}
Consider the system of one polynomial equation in one unknown
\[ f(x) = x^3-x^2+2x-1=0. \]
We wish to solve this polynomial system using homotopy continuation and a polyhedral start system. To do this we consider a lifted system of $f$ which we obtain by weighting each monomial of $f$ by some power of $t$:
\[ f_t = t^{\omega_3} x^3 - t^{\omega_2}x^2 +2t^{\omega_1}x - t^{\omega_0}.  \]
Now suppose we choose weighting $(\omega_0, \omega_1,\omega_2, \omega_3) = (0,3,1,2)$ so 
\[ f_t = t^2x^3 - tx^2 + 2xt^3 - t^0.  \]
A figure of this lifting is given in Figure~\ref{fig:ex1}. Solutions to $f_t = 0$ lie in the field of Puiseux series of $t$ and are of the form
\[x(t) = \hat{x} t^a + \text{ higher order terms in } t\]
where $a \in \Q$ and $\hat{x} \in \mathbb{C}^*$. For $x(t)$ to be a root of $f_t$, the lowest terms in $t$ must cancel out. Substituting in $x(t) = \hat{x} t^a $ into $f_t$, we have
\begin{align}
f_t(x(t)) = \hat{x}^3 t^{3a+2} - \hat{x}^2t^{2a+1} + 2 \hat{x} t^{a+3} - t^0. \label{eq_ft}
\end{align}
To have cancellation of the lowest terms, we must have the minimum exponent in $t$ achieved twice. In other words,
\begin{align}
 \min_a \{3a+2, 2a+1, a+3, 0\} \label{eq: trop ex}.
\end{align}
must be achieved twice. There are six options:
\begin{enumerate}
    \item $3a+2=2a+1 <a+3,0$
    \item $3a+2=a+3<2a+1,0$
    \item $3a+2 = 0<2a+1,a+3$
    \item $2a+1 = a+3<3a+2,0$
    \item $2a+1 = 0<3a+2,a+3$
    \item $a+3 = 0 < 3a+2, 2a+1$
\end{enumerate}
The only feasible solutions are the first and fifth where $a = -1$ and $a = -\frac{1}{2}$, respectively. For the first case, we substitute $a = -1$ into \eqref{eq_ft} giving
\[ \hat{x}^3 t^{-1} - \hat{x}^2 t^{-1} + 2 \hat{x}t^2 -1  \]
Multiplying through by $t$, we get
\[h_1(\hat{x},t) = \hat{x}^3 - \hat{x}^2 + 2 \hat{x}t^3 - t. \]
When $t = 0$ we have $h_1(\hat{x},0) = \hat{x}^3 - \hat{x}^2$ which has a unique $\mathbb{C}^*$ solution, $\hat{x} = 1$.

Similarly, we consider when $a = -\frac{1}{2}$ and substitute this value of $a$ into \eqref{eq_ft} to get 
\[ h_2(\hat{x},t) = \hat{x}^3 t^{\frac{1}{2}} - \hat{x}^2 + 2 \hat{x}^{\frac{7}{2}} - 1. \]
When $t = 0$ we have $h_2(\hat{x},0) = - \hat{x}^2 -1$ which has two $\mathbb{C}^*$ solutions, $\hat{x} = \pm \sqrt{-1}$. Therefore, to find all three solutions to $f(x) = 0$, we track the solution $\hat{x} = 1$ using the homotopy $h_1(\hat{x},t)$ from $t = 0$ to $t = 1$ and the solutions $\hat{x} = \pm \sqrt{-1}$ using the homotopy $h_2(\hat{x},t)$ from $t = 0$ to $t = 1$. A graphical depiction of the homotopy $h_1$ is shown in Figure~\ref{fig:homotopyex}.

\begin{figure}
    \centering
    \includegraphics[width = 0.3\textwidth]{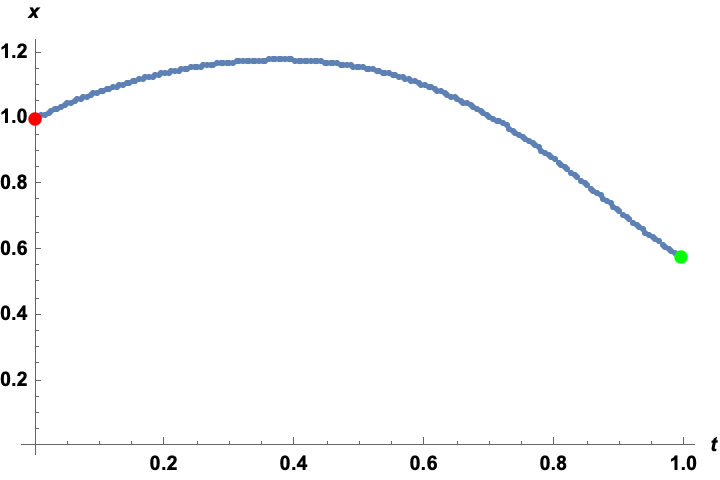}
    \caption{The homotopy $h_1(\hat{x},t)$ from Example~\ref{ex:1}. The red point is the starting point induced by the binomial system $\hat{x}^3 - \hat{x}^2 = 0$ while the green point is the target solution, namely a zero of $f(x) = 0$. }
    \label{fig:homotopyex}
\end{figure}

\begin{figure}
    \centering
    \includegraphics[width = 0.3\textwidth]{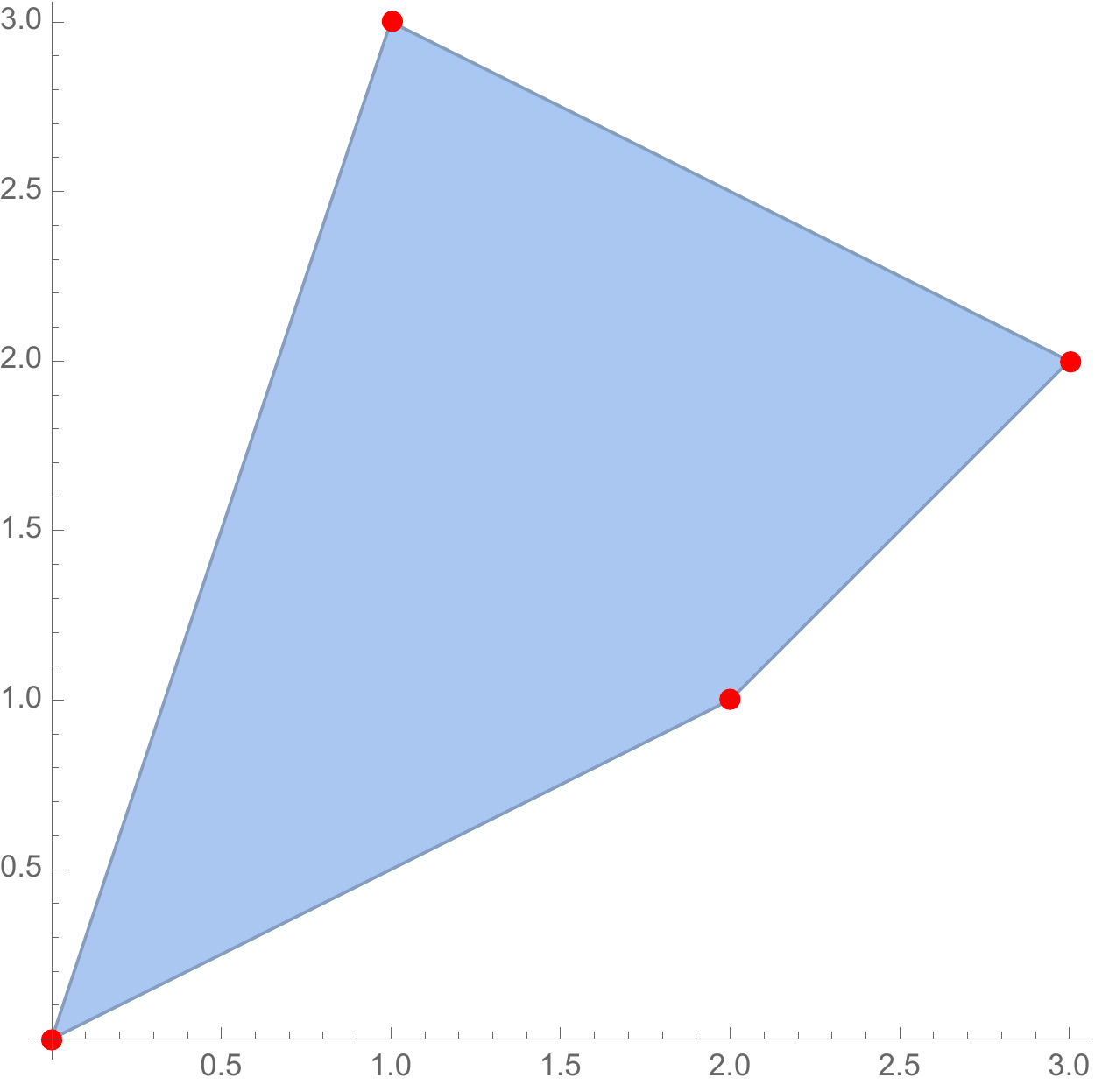}
    \caption{The polyhedral lift from Example~\ref{ex:1}}
    \label{fig:ex1}
\end{figure}

Finally, one can observe in \Cref{fig:ex1} that the lifted polytope of $\newt(f)$ has two lower facets, $\mathcal{F}_1 = \conv\{(0,0),(2,1)\}$ and $\mathcal{F}_2 = \conv\{(2,1),(3,2)\}$. $\mathcal{F}_1$ has inner normal given by $(-\frac{1}{2},1)$ while $\mathcal{F}_2$ has inner normal given by $(-1,1)$. These are precisely the solutions to \eqref{eq: trop ex}.
\end{example}

The main bottleneck with employing polyhedral homotopy algorithms is finding the binomial start systems and corresponding homotopies. Example~\ref{ex:1} shows how finding these start systems is equivalent to solving a tropical system for a fixed lifting. The main contribution of this paper is to find these binomial start systems for polynomial systems arising as the Lagrange systems of polynomial optimization programs.

\section{General hypersurface}\label{sec:hyper}
\label{section:general hypersurface}

We consider \eqref{eq:opt} when $m = 1$ and $\deg(f_0) = 1$. Specifically, we consider a polynomial optimization problem of the form 
\begin{align}
\min_{x \in \R^n} \ u^T x \quad \text{s.t.} \quad  f(x) = 0 \label{eq: hyp1}
\end{align}
where $u \in \R^n$ and $f(x)$ is a general degree $d\geq 2$ polynomial.
We wish to design a homotopy algorithm to find all critical points to \eqref{eq: hyp1}. We first consider the Lagrange system $\mathcal{L}_{u,f} = \{\ell_1,\ldots, \ell_n, f\}$ of \eqref{eq: hyp1} where
\begin{equation}
    \begin{aligned}
    \ell_i = u_i - \lambda \frac{\partial}{\partial x_i} f(x).
    \end{aligned}
\end{equation}

If $f$ is a generic degree $d$ polynomial and $u \in \mathbb{R}^n$ is generic, then by \cite{ourpaper}, the number of critical points to \eqref{eq: hyp1} is the same as that of 
\begin{align}
  \min_{x \in \R^n} \ u^T x \quad \text{s.t.} \quad  \hat{f}(x) = 0 \label{eq: hyp2}  
\end{align}
where $\hat{f} = \sum_{i=1}^n c_i x_i^d$ and $c_i$ is generic for $i \in [n]$. 
 The Lagrange system of \eqref{eq: hyp2} is $\mathcal{L}_{u,\hat{f}} = \{\hat{\ell}_1,\ldots,\hat{\ell}_n, \hat{f}\}$ where for $i \in [n]$ 
\begin{equation}
    \begin{aligned}\label{eq: lag hyp}
    \hat{\ell}_i &= u_i - d \lambda c_i x_i^{d-1} 
    \end{aligned}
\end{equation}
Observe that by \cite{ourpaper}, not only are the algebraic degrees of $(u,f)$ and $(u, \hat{f})$ the same, but the BKK bound of $\mathcal{L}_{u,\hat{f}}$ is the same as that of $\mathcal{L}_{u,f}$.

The Lagrange system $\mathcal{L}_{u,\hat{f}}$ is sparser than $\mathcal{L}_{u,f}$ and and in fact a binomial start system $G$
for $\mathcal{L}_{u,\hat{f}}$ can be constructed efficiently.
The following lemma shows that this is desirable since start systems for
$\mathcal{L}_{u,\hat{f}}$ are start systems for $\mathcal{L}_{u,f}$ as well. We first need an observation about the existence of straight line homotopies.

\begin{proposition}
\label{prop: straight-line homotopies}
Let $F(x; p) : \C^n \times C^k \longrightarrow \C^n$
denote a family of polynomials systems $F(x; p)$ that depends polynomially on parameters $p \in \C^k$
and $F(x; p_1)$ a fixed member of that family.
Then there is a nonempty set $U \subseteq \C^k$, open and dense in the Euclidean topology, such that
for every parameter $p_0$ in $U$ the straight line homotopy
\[
 H(t;x) =  (1-t) F(x; p_1) + t F(x; p_0)
\]
is sufficient for $F(x; p_1)$.
\end{proposition}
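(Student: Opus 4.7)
My plan is to treat this as a complex-parameter version of the classical ``$\gamma$-trick,'' in which the role of the generic complex scalar is played by a generic choice of $p_0 \in \C^k$. First I would form the enlarged polynomial family $H(x;t,p) := (1-t)F(x;p_1) + t\, F(x;p)$, viewed as a system in $x \in \C^n$ depending polynomially on $(t,p) \in \C \times \C^k$. Let $\Delta \subset \C \times \C^k$ be the discriminant locus: the set of parameters $(t,p)$ at which $H(\cdot;t,p)=0$ has a solution where the Jacobian $\partial_x H$ drops rank, or at which the number of isolated solutions in $\C^n$ drops below the generic value $N$ of the family. By a standard incidence-variety/elimination argument applied to the universal zero set inside $\C^n \times \C \times \C^k$, $\Delta$ is Zariski closed. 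The key step is to observe that $\Delta$ is a \emph{proper} subvariety, which follows from a Bertini-type genericity argument producing at least one $(t^\ast,p^\ast)$ for which $H(\cdot;t^\ast,p^\ast)$ has $N$ isolated nonsingular solutions.

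Next, for each $p_0 \in \C^k$ let $T(p_0) \subset \C$ be the set of complex $t$ with $(t,p_0) \in \Delta$. Off a Zariski-closed subset of $\C^k$ the vertical slice $\{p_0\} \times \C$ is not contained in $\Delta$, so $T(p_0)$ is finite, and on a Zariski open set its points are algebraic functions of $p_0$. I would then take $U$ to be the set of $p_0$ for which (i) $F(\cdot;p_0) = H(\cdot;1,p_0)$ has $N$ nonsingular isolated solutions and (ii) no branch of $T(p_0)$ meets the real half-open interval $(0,1]$. Each constraint excludes the preimage of a real $1$-dimensional set under a non-constant holomorphic function of $p_0$, and hence cuts out a real-analytic set of real codimension at least one in $\C^k$; therefore $U$ is Euclidean open and dense.

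Finally, for $p_0 \in U$ the real segment $\{(t,p_0) : t \in [0,1]\}$ meets $\Delta$ at most at the endpoint $t = 0$, where $H(\cdot;0,p_0) = F(\cdot;p_1)$. The parameter continuation theorem (Sommese--Wampler) then implies that the $N$ nonsingular isolated solutions of $H(\cdot;1,p_0) = F(\cdot;p_0)$ vary smoothly along $t \in (0,1]$, extend continuously as $t \to 0$, and their $t \to 0$ limits cover, with multiplicity, every isolated solution of $F(\cdot;p_1)$; this is precisely the sufficiency condition from the definition given in \Cref{sec:poly homotopy}. The main obstacle is establishing the properness of $\Delta$ in the first step, since one must rule out the degenerate possibility that every linear combination $H(\cdot;t,p)$ carries singular solutions; this is the only place where mild genericity of the underlying family $F(x;p)$ is implicitly needed, and it holds automatically in the setting of the rest of the paper.
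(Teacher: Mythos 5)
Your overall strategy --- parameter continuation plus a genericity argument for the segment --- is the same as the paper's, but you execute the genericity step along a genuinely different route. The paper applies the Morgan--Sommese theorem directly to the family $F(x;p)$ over $\C^k$, obtaining a proper complex subvariety $\Sigma \subset \C^k$, and then argues that the segment from a generic $p_0$ to $p_1$ misses $\Sigma$ by compactifying $\R^{2k}\cong\C^k$ to real projective space and projecting away from $p_1$: since $\Sigma$ is a complex variety it has real codimension at least two, its image under this projection has codimension at least one, and for $p_0$ outside the preimage of that image the open segment lies in a fiber disjoint from $\Sigma$. You instead build a discriminant $\Delta$ in the enlarged parameter space $\C\times\C^k$ of the pencil $H(x;t,p)$ and slice it over $p_0$. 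One genuine advantage of your construction: when $F$ depends polynomially but not affinely on $p$, the straight-line homotopy of \emph{systems} is not the homotopy induced by the straight-line path in $\C^k$, so the paper's identification $H(t,x)=F(x;\rho(t))$ with $\rho(t)=(1-t)p_1+tp_0$ is only literally valid for affine dependence (which is what occurs in the application in Lemma~\ref{lem:hom}); your enlarged family covers the statement as written.

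There is, however, a gap in your density step. You assert that condition (ii) excludes ``the preimage of a real $1$-dimensional set under a non-constant holomorphic function of $p_0$,'' but the non-constancy of the branches of $T(p_0)$ is precisely what needs proof. If $\Delta$ contains a horizontal component $\{c\}\times\C^k$ with $c\in(0,1]$, then $c\in T(p_0)$ for every $p_0$ and your $U$ is empty. This is not hypothetical at $c=1$: if the generic member of the pencil family has more nonsingular solutions than the generic member of the original family (take $F(x;p)=(x^2+p)^2$, where every fiber of $F$ has only singular roots while the generic pencil member is a quartic with four simple roots), the whole slice $t=1$ lies in your $\Delta$ and condition (i) is unsatisfiable as well, because your reference count $N$ is taken for the wrong family. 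So you must either define $N$ and $\Delta$ relative to the generic count of the original family and then rule out horizontal components of $\Delta$ over $(0,1]$, or find another argument; this is exactly the work that the paper's real-codimension-two projection does for free, since there $\Sigma$ lives in $\C^k$ rather than in $\C\times\C^k$ and a complex subvariety can never separate a generic real segment from its complement. Finally, the properness and Zariski-closedness of the degeneracy locus, which you defer to an unproven ``Bertini-type'' claim together with an extra genericity hypothesis on the family, is precisely the content of the Morgan--Sommese theorem and should be cited rather than assumed.
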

\begin{proof}
By the \emph{Parameter Continuation Theorem} by Morgan and Sommese \cite{sommese2005numerical}
there exists a proper algebraic subvariety $\Sigma \subset \C^k$ with the following property: 
Let $\rho: [0, 1] \rightarrow \C^k$ be any smooth path
and $H(t, x)  = F(x,\rho(t))$ the
corresponding homotopy.
If $$\rho([0,1)) \cap \Sigma = \emptyset ,$$ then as $t~\rightarrow~1$, the limits of the solution paths
$x(t)$
satisfying $H(x(t), t) = 0$ include all the isolated solutions to $F(x; \rho(1))$ = 0.
In particular, $H(t, x)$ is a sufficient homotopy.

From now on we identify the complex affine space $\C^k$ with real affine space $\R^{2k}$ and denote by
$\overline{\Sigma}$ the closure of $\Sigma$ in real projective space $\P_{\R}^{2k+1}$.
Consider the projection $\pi : \P_{\R}^{2k+1} 
\dashrightarrow \P_{\R}^{2k}$ away from the point $p_1$.
Since the codimension of $\overline{\Sigma}$, considered as a manifold, is at least two,
the image $\pi \left( \overline{\Sigma} \right)$ has codimension at least one in $\P_{\R}^{2k}$.
In particular,
the image $\pi(p_0)$ of a generic element $p_0$ is not contained in $\pi \left( \overline{\Sigma} \right)$.
Since the image $\rho(  [0,1) )$ of the
straight path
$$
    \rho(t) = (1-t)  p_1 + t p_0
$$
between $p_0$ and $p_1$ is is contained in the fiber $\pi^{-1}(\pi(p_0))$, it
does not intersect $\Sigma$. Consequently the to $\rho$ associated straight line homotopy is sufficient.
\end{proof}

\begin{lemma}
\label{lem:hom}
Let $G$ be a zero dimensional quadratic system of polynomials with exactly
$BKK(\mathcal{L}_{u,\hat{f}})$ solutions.
There is a sufficient homotopy connecting 
$G$
to $\mathcal{L}_{u,f}$.
\end{lemma}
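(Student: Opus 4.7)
My plan is to exhibit the desired sufficient homotopy as a concatenation of two pieces, bridged by the Lagrange system $\mathcal{L}_{u, \hat f}$ of the sparsified objective. The essential input is the result of \cite{ourpaper}: $\mathcal{L}_{u, f}$ and $\mathcal{L}_{u, \hat f}$ each carry exactly $BKK(\mathcal{L}_{u, \hat f})$ isolated complex solutions, matching the count hypothesized for $G$. With all three systems carrying equal solution counts, paths will match bijectively and no path will diverge.

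For the bridge $\mathcal{L}_{u, \hat f} \rightsquigarrow \mathcal{L}_{u, f}$, I would embed both systems in the polynomial family $F(x, \lambda; p)$ parameterized by the coefficients of the constraint polynomial over a common monomial support containing both $f$ and $\hat f$. Applying Proposition~\ref{prop: straight-line homotopies} twice, once with $p_1$ equal to the parameter of $\mathcal{L}_{u, \hat f}$ and once with $p_1$ equal to the parameter of $\mathcal{L}_{u, f}$, yields open dense subsets $U_1, U_2 \subset \mathbb{C}^k$ of good intermediate parameters. Choosing $p_0 \in U_1 \cap U_2$ (nonempty since both are dense) gives two sufficient straight-line homotopies sharing the common intermediate $F(\cdot; p_0)$. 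Running one in reverse and concatenating with the other produces a sufficient homotopy $\mathcal{L}_{u, \hat f} \rightsquigarrow \mathcal{L}_{u, f}$: the reversed portion remains valid because both of its endpoints already carry the generic $BKK$ count of simple isolated solutions, so the implicit function theorem supplies local path sections everywhere along the reversed segment.

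For the first piece $G \rightsquigarrow \mathcal{L}_{u, \hat f}$, I would use the two-parameter family $F'(x, \lambda; \gamma, s) := \gamma (1-s)\, G(x, \lambda) + s\, \mathcal{L}_{u, \hat f}(x, \lambda)$, noting that for any $\gamma \neq 0$ the system $F'(\cdot; \gamma, 0) = \gamma G$ has the same zero set as $G$, while $F'(\cdot; 1, 1) = \mathcal{L}_{u, \hat f}$. Proposition~\ref{prop: straight-line homotopies} applied with $p_1 = (1, 1)$ supplies an open dense set of generic intermediate parameters $(\gamma_0, s_0)$ from which the straight-line homotopy to $\mathcal{L}_{u, \hat f}$ is sufficient; prepending the segment $(\gamma_{\text{start}}, 0) \to (\gamma_0, s_0)$ (smoothed at the junction) gives a sufficient homotopy $G \rightsquigarrow \mathcal{L}_{u, \hat f}$, since the initial segment lies in the affine line $\{s = 0\}$ on which the zero set is constantly equal to that of $G$. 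Concatenating with the bridge from the previous paragraph yields the desired homotopy. The main technical obstacle is the reversal step in the bridge: one must verify that reversing a sufficient straight-line parameter homotopy remains sufficient when both endpoints have matching smooth isolated solution counts. This is resolved by the codimension argument from the proof of Proposition~\ref{prop: straight-line homotopies}, together with the equality of $BKK$ counts ensuring that no path diverges in either direction.
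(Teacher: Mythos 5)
Your overall architecture is essentially the paper's: embed the relevant systems in a coefficient family, connect each specific system to a generic member via Proposition~\ref{prop: straight-line homotopies}, reverse the leg that runs in the wrong direction, and justify the reversal by the equality of root counts $\#\{G=0\} = BKK(\mathcal{L}_{u,\hat f}) = \#\{\mathcal{L}_{u,\hat f}=0\} = \#\{\mathcal{L}_{u,f}=0\}$ supplied by \cite{ourpaper} and the hypothesis on $G$. The paper does this with a single hub (a generic member $F(x;c_0)$ of the family supported on the monomials of $\mathcal{L}_{u,f}$) and two legs; your extra waypoint at $\mathcal{L}_{u,\hat f}$ is harmless but unnecessary, since $G$ can be connected to the same generic hub directly.

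The genuine problem is your first leg, $G \rightsquigarrow \mathcal{L}_{u,\hat f}$. The two-parameter family $F'(x,\lambda;\gamma,s)$ does not do what you claim: the prepended segment from $(\gamma_{\mathrm{start}},0)$ to $(\gamma_0,s_0)$ does \emph{not} lie in the line $\{s=0\}$ (a generic $s_0$ is nonzero), so the assertion that the zero set is constant along it is false, and you give no argument that this segment avoids the discriminant $\Sigma$. More fundamentally, the $\gamma$-scaling does not resolve the direction issue: Proposition~\ref{prop: straight-line homotopies} produces sufficient homotopies that \emph{start} at a generic parameter and \emph{end} at the fixed one, whereas your first leg must \emph{start} at the specific system $G$. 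The correct repair is the same device you already use for the bridge (and which the paper uses): take a sufficient straight-line homotopy from a generic member of the family down to $G$, observe that the endpoint map from the $BKK(\mathcal{L}_{u,f})$ solutions of the generic member onto the isolated solutions of $G$ is surjective, hence bijective because $\#\{G=0\}=BKK(\mathcal{L}_{u,\hat f})=BKK(\mathcal{L}_{u,f})$, and then reverse. Note also that this count argument, not the implicit function theorem, is what carries the reversal: the IFT gives local sections only over $t\in[0,1)$ where the path avoids $\Sigma$; it is the equality of counts that rules out merging or divergence at $t=1$. With the first leg replaced by this hub-and-reverse argument (or simply absorbed into a single hub as in the paper), your proof goes through.
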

\begin{proof}
Let 
$
F(x; c) 
$
denote the family of polynomials with monomial support contained in the support of
$\mathcal{L}_{u,f}$. In particular, the coefficient vector $c$ has one entry for each monomial of each polynomial of $\mathcal{L}_{u,f}$.
We denote by $F(x; c_0)$ a generic member of this family.

The desired homotopy will be constructed explicitly as a composition. We start by
connecting $F(x; c_0)$ to both $\mathcal{L}_{u,f}$ and $G$ with a straight line homotopy, which
by Proposition \ref{prop: straight-line homotopies} is a sufficient homotopy in both cases.
We denote the straight line homotopy from $F(x; c_0)$ to $G$ by $H$.
It now suffices to prove that
$H$ does not merge any solutions of $F(x; c_0)$, allowing us to define the inverted homotopy $H^*$ by setting
for $t$ in $(0,1)$
$H^*(t,x) = H(1-t,x)$ and
$H^*(0,x) = G(x)$.
Since tracking the roots of $F(x; c_0)$ to the roots of $G$ along the sufficient homotopy $H$ defines a surjective map, it is enough to prove that $F(x; c_0)$ and $G$ have the same number of solutions.

By the results of Bernstein and Kushnirenko \cite{bernshtein1979the,kouchnirenko1976polyedres},
the number of solutions of $F(x; c_0)$ is equal to the BKK bound of $\mathcal{L}_{u,f}$.
In \cite{ourpaper} the authors prove
that the polynomial system $\mathcal{L}_{u,f}$ achieves this bound.
Furthermore, as we noted at the beginning of Section \ref{section:general hypersurface}, $\mathcal{L}_{u,f}$ and $\mathcal{L}_{u,\hat{f}}$ have the same number of solutions:
\begin{equation}
\label{eq: inequalities one}
    \# \{ \mathcal{L}_{u,\hat{f}} \} = 
\# \{ \mathcal{L}_{u,f} = 0 \} = 
BKK(\mathcal{L}_{u,\hat{f}}).
\end{equation}
At the same time the number of solutions to $G$ is equal to the BKK bound of 
$\mathcal{L}_{u,\hat{f}}$, which is upper bounded by the BKK bound of $\mathcal{L}_{u,f}$ by inclusion on Newton poytopes:
\begin{equation}
\label{eq: inequalities two}
    \# \{ \mathcal{L}_{u,\hat{f}} \} \leq 
\# \{ G = 0 \} \leq   BKK(\mathcal{L}_{u,\hat{f}}).
\end{equation}
Together, inequalities \eqref{eq: inequalities one} and \eqref{eq: inequalities two} imply that
$\mathcal{L}_{u,\hat{f}}$ and $G$ have the same root count.
\end{proof}

We now give the main result of this section.

\begin{theorem}\label{thm:hyp}
For any $d \geq 2$ consider the Lagrange system of \eqref{eq: hyp1}. Then for generic $u$ and $f$ there are $d(d-1)^{n-1}$ complex solutions to the corresponding Lagrange system. Moreover, all of these solutions can be found via the homotopy 
\[H(x, \lambda ; t) = (1-t)B(x, \lambda) + t \gamma \mathcal{L}_{u,f}(x, \lambda) \] 
where 
\begin{align}\label{eq:bin hyp}
    B(x,\lambda) &= \begin{cases} 
    u_1 - d \lambda c_1 x_1^{d-1} = 0 \\ 
    \ \vdots \\ 
    u_n - d \lambda c_n x_n^{d-1} = 0\\
    \ c_0 + c_1 x_1^d = 0,
    \end{cases}
\end{align}
$\gamma \in \mathbb{C}$ is a generic constant and $\mathcal{L}_{u,f}(x, \lambda) $ is the Lagrange system of \eqref{eq: hyp1}.
\end{theorem}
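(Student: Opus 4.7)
The plan has three parts: count the complex solutions, count the roots of the explicit binomial start system $B$, and package these facts with the lemmas of this section to conclude that the straight-line homotopy is sufficient.

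First, for the solution count, I would specialize the algebraic degree formula \eqref{eq: alg deg} with $m = 1$, $d_0 = 1$, and $d_1 = d$. Since $d_0 - 1 = 0$, the symmetric sum collapses to
\[
S_{n-1}(0, d-1) = \sum_{i+j=n-1} 0^{i}(d-1)^{j} = (d-1)^{n-1},
\]
so the algebraic degree is $d_1 \cdot S_{n-1}(0, d-1) = d(d-1)^{n-1}$, which is the first claim.

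Next, I would count the roots of the binomial system $B$ in \eqref{eq:bin hyp} directly. The last equation $c_0 + c_1 x_1^d = 0$ gives $d$ choices for $x_1 \in \C^{*}$. The first equation then pins down $\lambda = u_1 / (d c_1 x_1^{d-1})$ uniquely (nonzero for generic $u_1$). For each $i \in \{2,\dots,n\}$, the equation $u_i - d\lambda c_i x_i^{d-1} = 0$ admits $d-1$ solutions in $\C^{*}$. Multiplying the choices gives exactly $d(d-1)^{n-1}$ isolated roots of $B$. Combined with the observation in the paragraph preceding Lemma \ref{lem:hom} that $\mathrm{BKK}(\mathcal{L}_{u,\hat f}) = \mathrm{BKK}(\mathcal{L}_{u,f})$ equals the algebraic degree, this matches the count required by the hypothesis of Lemma \ref{lem:hom}.

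Finally, I would apply Lemma \ref{lem:hom} with $G = B$ to conclude that a sufficient homotopy from $B$ to $\mathcal{L}_{u,f}$ exists. To see that the particular gamma-scaled straight line $H(x,\lambda;t) = (1-t)B + t\gamma\mathcal{L}_{u,f}$ is itself sufficient, I would embed both $B$ and $\mathcal{L}_{u,f}$ into the family of polynomial systems whose monomial support is the union of their supports (i.e., the support of $\mathcal{L}_{u,f}$), view the path $t \mapsto (1-t)B + t\gamma\mathcal{L}_{u,f}$ as a straight path in coefficient space, and invoke Proposition \ref{prop: straight-line homotopies}: for $\gamma$ avoiding a measure-zero set the path misses the discriminant $\Sigma$ on $[0,1)$, so all $d(d-1)^{n-1}$ isolated roots of $B$ track to all isolated roots of $\mathcal{L}_{u,f}$, which by the equal root counts accounts for every solution.

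The main obstacle is conceptual rather than computational: justifying that the direct straight-line homotopy (as opposed to the two-step composition through a fully generic intermediate system used inside the proof of Lemma \ref{lem:hom}) is sufficient. This reduces to checking that the straight path between the non-generic points $B$ and $\mathcal{L}_{u,f}$ in the coefficient space of the ambient sparse family avoids the discriminant locus for generic $\gamma$, which is exactly the content of the gamma trick and is guaranteed by Proposition \ref{prop: straight-line homotopies} together with the matching root counts.
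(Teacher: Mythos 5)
Your proposal is correct, but it reaches the conclusion by a genuinely different route than the paper. The paper's proof never quotes the degree formula \eqref{eq: alg deg}: it constructs $B$ by explicitly solving the tropicalization \eqref{eq:trop general} of $\mathcal{L}_{u,\hat f}$ under the lifting \eqref{eq:lift hyp}, shows that the tropical system has the unique solution $a_i=1$, $b=0$ (a single mixed cell), and reads off $B$ and the count $d(d-1)^{n-1}$ from the resulting binomial system together with the BKK-exactness results of \cite{ourpaper}; this is why the paper can remark afterwards that it gives an independent re-derivation of the Nie--Ranestad bound for hypersurfaces. You instead import the count from \eqref{eq: alg deg} with $(d_0,d_1)=(1,d)$, verify by direct back-substitution that $B$ has exactly $d(d-1)^{n-1}$ torus roots (correct: generic $c_0$, $u_i$ rule out $x_i=0$ and $\lambda=0$), and match these counts against $\mathrm{BKK}(\mathcal{L}_{u,\hat f})=\mathrm{BKK}(\mathcal{L}_{u,f})$ to feed Lemma \ref{lem:hom}. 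Your route is shorter and avoids tropical geometry, at the cost of leaning on the external formula and of not exhibiting $B$ as the start system of an explicit polyhedral lifting, which is the structural content the paper is after. On the one genuinely delicate point --- upgrading the two-step composite homotopy produced inside Lemma \ref{lem:hom} to the single straight-line homotopy $H=(1-t)B+t\gamma\mathcal{L}_{u,f}$ asserted in the statement --- note that Proposition \ref{prop: straight-line homotopies} as literally stated does not apply, since neither endpoint is a generic member of the family; what is needed is the pencil version of the gamma trick, anchored by the fact that $B$ attains the generic root count with nonsingular roots and hence lies off the bad locus $\Sigma$, so that $\Sigma$ meets the projective pencil through $B$ and $\mathcal{L}_{u,f}$ in finitely many points and a generic $\gamma$ steers the arc $t\mapsto[(1-t):t\gamma]$ around them for $t\in[0,1)$. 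You essentially say this, and your justification is in fact more explicit than the paper's one-line remark that ``by genericity of $f$, this homotopy can be chosen to be a straight line homotopy.''
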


\begin{proof}

In order to design a polyhedral homotopy algorithm as described in \cite{huber1995a}, in the following
we construct a binomial start system $B$ of $\mathcal{L}_{u, \hat{f}}$ by solving a tropical system.
By  the proof of Lemma~\ref{lem:hom} we then obtain a homotopy from $B$ to 
$\mathcal{L}_{u, f}$.
Note that,
 by genericity of $f$, this homotopy can be chosen to be a straight line homotopy.

By Lemma~\ref{lem:hom} it suffices to design a polyhedral homotopy algorithm as described in \cite{huber1995a} for $\mathcal{L}_{u, \hat{f}}$. In order to define this algorithm, we need to first find a binomial start system of $\mathcal{L}_{u, \hat{f}}$ which can be done by solving a tropical system.

Let $a_i$ be the tropical variable corresponding to $x_i$ and $b$ the tropical variable corresponding to $\lambda$. Then for a given lifting $\omega \in \mathbb{R}^{3n+1}$, the corresponding tropical system that we want to solve is 
\begin{equation}\label{eq:trop general}
\begin{aligned}
    &\min_{a\in\mathbb{Q}^n, b \in \mathbb{Q}} \  \{\omega_{1,1}, (d-1)a_1 + b + \omega_{1,2}\} \\
    &\ \ \vdots \\
    &\min_{a\in\mathbb{Q}^n, b \in \mathbb{Q}} \  \{\omega_{n,1}, (d-1)a_n + b + \omega_{n,2}\} \\
    &\min_{a\in\mathbb{Q}^n, b \in \mathbb{Q}} \  \{\omega_{n+1,1}, d a_1 + \omega_{n+1,2},\ldots,da_n + \omega_{n+1,n+1}\}
\end{aligned}
\end{equation}

We consider a specific lifting that induces a unique solution to \eqref{eq:trop general}, giving a homotopy from one binomial start system to the desired target system \eqref{eq: lag hyp}.
With the particular lifting 
\begin{equation}\label{eq:lift hyp}
\begin{aligned}
    \omega_{ij} = \begin{cases} 
    0 & \text{if} \quad 1 \leq i \leq n+1, \ j = 1 \\
    1-d & \text{if} \quad 1 \leq i \leq n, \ j = 2 \\
    -d & \text{if}\quad  (i,j) = (n+1,2) \\
    1-d &\text{else}
    \end{cases}
\end{aligned}
\end{equation}
This gives the following tropical system:
\begin{equation}\label{eq:trop_hyp}
\begin{aligned}
    &\min_{a\in\mathbb{Q}^n, b \in \mathbb{Q}} \  \{0, (d-1)a_1 + b + 1-d\} \\
    &\ \qquad \vdots \\
    &\min_{a\in\mathbb{Q}^n, b \in \mathbb{Q}} \  \{0, (d-1)a_n + b + 1-d\} \\
    &\min_{a\in\mathbb{Q}^n, b \in \mathbb{Q}} \  \{0, d a_1 -d , d a_2 + 1-d, \ldots,da_n + 1-d\}
\end{aligned}
\end{equation}

We claim there is a unique solution to \eqref{eq:trop_hyp} given by $a_i = 1$ for $i \in [n]$ and $b = 0$.

First, observe that the first $n$ equations of \eqref{eq:trop_hyp} force $(d-1) a_i + b + 1 - d = 0$ for $i \in [n]$. This gives $a_i = \frac{d-1 -b}{d-1}$. Substituting this into the final equation and simplifying we have that 
\[\min_{a\in\mathbb{Q}^n, b \in \mathbb{Q}} \  \{0, \frac{bd}{1-d} , \frac{bd}{1-d}+1, \ldots,\frac{bd}{1-d}+1\} \]
must have minimum attained twice. It is then clear that the only solution is $b = 0$ where the minimum is achieved at the first two terms. Back substituting then gives that $a_i = \frac{d-1}{d-1} = 1$ for $i \in [n]$. The binomial start system $\mathcal{B}(x, \lambda)$ defined in \eqref{eq:bin hyp} then follows immediately from the solution to this tropical system.
\end{proof}

Observe that Bezout's Theorem gives an upper bound that \eqref{eq: lag hyp} has at most $d^{n+1}$ solutions but we see that the binomial system \eqref{eq:bin hyp} has $d(d-1)^{n-1}$ solutions. This gives another proof of the bound given in \cite{nie2009algebraic} for hypersurfaces and highlights the benefit of using a polyhedral start system over a total degree start system.

Finally, we wish to remark that homotopy defined in \Cref{thm:hyp} will work for finding all smooth critical points for the optimization of a linear function over any hypersurface, $f$, so long as $\newt(f)$ is contained in $\conv\{0,de_1,\ldots,d e_n\}$. When $\newt(f)$ is a strict subset of $\conv\{0,de_1,\ldots,d e_n\}$, then algebraic degree of $f$ can be less than $d(d-1)^{n-1}$ meaning, this homotopy may lead to wasted computation in tracking divergent paths.

\section{Refined hypersurface}\label{sec: hyp refined}
We wish to now refine the hypersurface cased discussed in the previous section. Instead of assuming $f(x)$ generic degree $d$ hypersurface, we assume $\newt(f) = \conv \{ 0, d_1 e_1, \ldots, d_n e_n \}$. As above, to design an optimal binomial start system we first consider the monomials only corresponding to vertices of $\newt(f)$. In this case, we consider $f = c_0 + \sum_{i=1}^n c_i x_i^{d_i}$ where $c_i$ are generic constants. In this case, the Lagrange system corresponding to \eqref{eq: hyp1} is $\mathcal{L}_{u,f} = \{\ell_1,\ldots, \ell_n, f\}$ where for $i \in [n]$
\[ \ell_i = u_i - d_i c_i \lambda x_i^{d_i - 1}. \]

\begin{theorem}\label{thm:hyp refined}
Consider \eqref{eq: hyp1} where $u$ is generic and  
\[\newt(f) = \conv\{0,d_1e_1,\ldots,d_n e_n \}\]
where $1 \leq d_1 \leq d_2 \leq \cdots \leq d_n$ and the non-zero coefficients of $f$ are generic. 
The algebraic degree of \eqref{eq: hyp1} is
\[d_1 \cdot (d_2-1) \cdots (d_n - 1).\] 
Moreover, all solutions of $\mathcal{L}_{u,f}(x) = 0$ can be found via the homotopy $H(x, \lambda ; t) = (1-t)B(x, \lambda) + \gamma t \mathcal{L}_{u,f}(x, \lambda) $ where 
\begin{align}\label{eq:bin hyp 2}
    B(x,\lambda) &= \begin{cases} 
    u_1 - d_1 \lambda c_1 x_1^{d_1-1} = 0 \\ 
    \ \vdots \\ 
    u_n - d_n \lambda c_n x_n^{d_n-1} = 0\\
    \ c_0 + c_1 x_1^{d_1} = 0,
    \end{cases}
\end{align}
$\gamma \in \mathbb{C}$ is generic.
\end{theorem}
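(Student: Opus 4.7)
The plan is to follow the strategy of the proof of Theorem~\ref{thm:hyp} closely, modifying the lifting so that it is adapted to the heterogeneous degrees $d_1 \leq \cdots \leq d_n$. By the results of \cite{ourpaper} already invoked at the beginning of Section~\ref{section:general hypersurface}, the BKK bound of $\mathcal{L}_{u,f}$ and the number of complex critical points of \eqref{eq: hyp1} depend only on $\newt(f)$. Hence, as in the previous theorem, one may replace $f$ by the sparse model $\hat{f} = c_0 + \sum_{i=1}^n c_i x_i^{d_i}$ with generic $c_i$, and Lemma~\ref{lem:hom} reduces the task to producing a binomial system $B$ with $BKK(\mathcal{L}_{u,\hat{f}})$ solutions, together with a homotopy from $B$ to $\mathcal{L}_{u,f}$ that can be taken to be straight line by genericity of $f$.

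I would set up the tropical system analogous to \eqref{eq:trop general}, with tropical variables $a_1,\ldots,a_n$ for $x_1,\ldots,x_n$ and $b$ for $\lambda$. The $i$-th Lagrange equation contributes $\min\{\omega_{i,1},\, (d_i-1)a_i + b + \omega_{i,2}\}$, and the constraint contributes $\min\{\omega_{n+1,1},\, d_1 a_1 + \omega_{n+1,2},\ldots, d_n a_n + \omega_{n+1,n+1}\}$. The lifting I would use is $\omega_{i,1} = 0$ for $1 \leq i \leq n+1$, $\omega_{i,2} = 1-d_i$ for $1 \leq i \leq n$, $\omega_{n+1,2} = -d_1$, and $\omega_{n+1,j+1} = 1-d_j$ for $2 \leq j \leq n$. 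The next step is to show that this lifting has the unique tropical solution $(a,b) = (1,\ldots,1,0)$, which directly yields the binomial start system $B$ in \eqref{eq:bin hyp 2}.

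The uniqueness argument runs as follows. Each of the first $n$ equations has only two terms, so the min being attained twice forces $(d_i-1)a_i + b + 1 - d_i = 0$, and hence $a_i = 1 - b/(d_i - 1)$ when $d_i \geq 2$, or $b = 0$ outright when $d_i = 1$. Substituting into the last equation reduces it to $\min\{0,\, -d_1 b/(d_1-1),\, 1 - d_2 b/(d_2-1),\ldots,1 - d_n b/(d_n-1)\}$, with the first nontrivial term simplified appropriately when $d_1 = 1$. A sign analysis in $b$, using $d_1 \leq d_j$ so that $d_j/(d_j-1) \leq d_1/(d_1-1)$ for $j \geq 2$, shows that the minimum can be attained twice only at $b = 0$, where the active terms are the constant $0$ and the $x_1^{d_1}$-term. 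Back-substitution then gives $a_i = 1$ for all $i$, uniquely recovering $B$.

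Finally, I would count the solutions of $B$ directly: $c_0 + c_1 x_1^{d_1} = 0$ has $d_1$ roots in $x_1$; each determines $\lambda = u_1/(d_1 c_1 x_1^{d_1 - 1})$ from $\ell_1$; and for each $i \geq 2$, the equation $u_i = d_i c_i \lambda x_i^{d_i - 1}$ contributes $d_i - 1$ values of $x_i$, giving the total $d_1(d_2-1)\cdots(d_n-1)$. Invoking Lemma~\ref{lem:hom} then produces a sufficient straight line homotopy from $B$ to $\mathcal{L}_{u,f}$, and the root count equals the algebraic degree. The delicate point I expect to address is the mixed regime in which some $d_i$ equal $1$ while others exceed $1$: there the elimination of $b$ must first be carried out using an index with $d_i = 1$, and the assertion of tropical uniqueness should be read as uniqueness of the cell of the Cayley subdivision that supports solutions of $B$, since other cells of the subdivision contribute zero to the BKK count and the formula $d_1(d_2-1)\cdots(d_n-1)$ vanishes accordingly.
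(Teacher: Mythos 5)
Your proposal follows the paper's proof essentially verbatim: the same reduction to the vertex model $\hat f = c_0+\sum c_i x_i^{d_i}$ via \cite{ourpaper} and Lemma~\ref{lem:hom}, the same lifting \eqref{eq:lift hyp 2}, the same elimination of $b$ from the first $n$ tropical equations, the same uniqueness argument at $(a,b)=(1,\ldots,1,0)$, and the same binomial start system \eqref{eq:bin hyp 2}. Your explicit root count for $B$ and your careful treatment of indices with $d_i=1$ (where the paper's substitution $a_i=(d_i-1-b)/(d_i-1)$ degenerates, e.g.\ in Corollaries~\ref{cor:alg deg 1} and the one following) are welcome refinements, but they do not change the route.
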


\begin{proof}
As before, we design a polyhedral homotopy algorithm as described in \cite{huber1995a} for $\mathcal{L}_{u, \hat{f}}$.

Let $a_i$ be the tropical variable corresponding to $x_i$ and $b$ the tropical variable corresponding to $\lambda$. Then for a given lifting $\omega \in \mathbb{R}^{3n+1}$, the corresponding tropical system that we want to solve is 
\begin{equation}\label{eq:trop general 2}
\begin{aligned}
    &\min_{a\in\mathbb{Q}^n, b \in \mathbb{Q}} \  \{\omega_{1,1}, (d_1-1)a_1 + b + \omega_{1,2}\} \\
    &\ \ \vdots \\
    &\min_{a\in\mathbb{Q}^n, b \in \mathbb{Q}} \  \{\omega_{n,1}, (d_n-1)a_n + b + \omega_{n,2}\} \\
    &\min_{a\in\mathbb{Q}^n, b \in \mathbb{Q}} \  \{\omega_{n+1,1}, d_1 a_1 + \omega_{n+1,2},\ldots,d_na_n + \omega_{n+1,n+1}\}
\end{aligned}
\end{equation}

We consider a specific lifting that induces a unique solution to \eqref{eq:trop general 2}, giving a homotopy from one binomial start system to the desired target system.
Consider the particular lifting 
\begin{equation}\label{eq:lift hyp 2}
\begin{aligned}
    \omega_{ij} = \begin{cases} 
    0 & \text{if} \quad 1 \leq i \leq n+1, \ j = 1 \\
    1-d_i & \text{if} \quad 1 \leq i \leq n, \ j = 2 \\
    -d_1 & \text{if}\quad  (i,j) = (n+1,2) \\
    1-d_i &\text{if}\quad i = n+1, \ 3 \leq j \leq n+1
    \end{cases}
\end{aligned}
\end{equation}
This gives the following tropical system:
\begin{equation}\label{eq:trop_hyp 2}
\begin{aligned}
    &\min_{a\in\mathbb{Q}^n, b \in \mathbb{Q}} \  \{0, (d_1-1)a_1 + b + 1-d_1\} \\
    &\ \qquad \vdots \\
    &\min_{a\in\mathbb{Q}^n, b \in \mathbb{Q}} \  \{0, (d_n-1)a_n + b + 1-d_n\} \\
    &\min_{a\in\mathbb{Q}^n, b \in \mathbb{Q}} \  \{0, d_1 a_1 -d_1 , d_2 a_2 + 1-d_2, \ldots,d_na_n + 1-d_n\}
\end{aligned}
\end{equation}

We claim there is a unique solution to \eqref{eq:trop_hyp 2} given by $a_i = 1$ for $i \in [n]$ and $b = 0$.

First, observe that the first $n$ equations of \eqref{eq:trop_hyp 2} force $(d_i-1) a_i + b + 1 - d_i = 0$ for $i \in [n]$. This gives $a_i = \frac{d_i-1 -b}{d_i-1}$. Substituting this into the final equation and simplifying we have that 
\begin{align}
\min_{a\in\mathbb{Q}^n, b \in \mathbb{Q}} \  \{0, \frac{bd_1}{1-d_1} , \frac{bd_2}{1-d_2}+1, \ldots,\frac{bd_n}{1-d_n}+1\} \label{eq:last trop}
\end{align}
must have minimum attained twice. It is clear that there is a solution when $b = 0$, where the minimum is achieved at the first two terms. Back substituting then gives that $a_i = \frac{d-1}{d-1} = 1$ for $i \in [n]$. The binomial start system $B(x, \lambda)$ defined in \eqref{eq:bin hyp} then follows immediately from the solution to this tropical system.

It remains to show that there are no other solutions to \eqref{eq:last trop}. There are three cases to rule out:
\begin{enumerate}
\item  the minimum of \eqref{eq:last trop} is not attained at $0$ and $\frac{b d_i}{1-d_i} +1$ for $2 \leq i \leq n$; 
\item the minimum of \eqref{eq:last trop} is not attained at $\frac{b d_i}{1-d_i} +1$ and $\frac{b d_1}{1-d_1}$ for $2 \leq i \leq n$; and 
\item the minimum of \eqref{eq:last trop} is not attained at $\frac{b d_i}{1-d_i} +1$ and $\frac{b d_j}{1-d_j} +1$ for $i \neq j$, $2 \leq i,j \leq n$ 
\end{enumerate}
For the first case, observe that if $0 = \frac{b d_i}{1-d_i} + 1$ for some $2 \leq i \leq n$, then $b = \frac{d_i -1}{d_i}$. This then implies that $\frac{b d_1}{1-d_1} = \frac{d_1}{1-d_1} \cdot \frac{d_i - 1}{d_i}<0$ so the minimum is not attained at $0$. To rule out case $(2)$, consider when $\frac{b d_i}{1-d_i} + 1 = \frac{bd_1}{1-d_1}$. If $d_i = d_1$ then there is no solution so suppose $d_i > d_1$. In this case, $b = \frac{(d_1 -1)(d_i -1)}{d_1 - d_i}<0$ and $\frac{d_1b}{1-d_1} = \frac{d_1(d_i-1)}{d_i-d_1}>0$ so the minimum would be attained at $0$ instead. Finally, if $\frac{b d_i}{1-d_i} +1 = \frac{b d_j}{1-d_j} +1$ this implies that $b = 0$ and in this case the minimum is attained at $0$ and $\frac{bd_1}{1-d_1}$.
\end{proof}

As a corollary we now have a families of hypersurfaces with algebraic degree one and zero.

\begin{corollary}\label{cor:alg deg 1}
Consider the Lagrange system of \eqref{eq: hyp1} where $u$ and $f$ are generic and $\newt(f) = \conv \{0, e_1, 2e_2,\ldots,2e_n \}$. Then the algebraic degree of $(u,f)$ is one.
\end{corollary}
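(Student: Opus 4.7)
The plan is to deduce this as a direct specialization of \Cref{thm:hyp refined}. The Newton polytope $\conv\{0, e_1, 2e_2, \ldots, 2e_n\}$ is exactly of the form $\conv\{0, d_1 e_1, \ldots, d_n e_n\}$ with $d_1 = 1$ and $d_2 = d_3 = \cdots = d_n = 2$, and the ordering hypothesis $1 \leq d_1 \leq d_2 \leq \cdots \leq d_n$ is satisfied. Hence the hypotheses of \Cref{thm:hyp refined} apply to the generic pair $(u,f)$.

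First I would verify that the degree formula from \Cref{thm:hyp refined} specializes correctly. Plugging in $d_1 = 1$ and $d_i = 2$ for $i \geq 2$ gives
\[
d_1 \cdot (d_2 - 1)(d_3 - 1) \cdots (d_n - 1) = 1 \cdot 1 \cdot 1 \cdots 1 = 1,
\]
which is the claimed algebraic degree. The binomial start system from \eqref{eq:bin hyp 2} becomes the system whose first equation is linear in $x_1$ (since $d_1 - 1 = 0$, the term $x_1^{d_1 - 1}$ is a constant), whose remaining Lagrangian equations are linear in $\lambda x_i$, and whose last equation $c_0 + c_1 x_1 = 0$ is linear in $x_1$; in particular this system has exactly one $\mathbb{C}^*$-solution, which matches the count.

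I do not foresee a serious obstacle: the statement is purely a corollary obtained by evaluating the formula from \Cref{thm:hyp refined} at a specific tuple of degrees. The only thing worth checking carefully is the hypothesis that $f$ and $u$ are generic, which is given, and that the corresponding nonzero coefficients of $f$ (at the vertices of $\newt(f)$) are generic, which follows from the genericity of $f$. Thus the proof amounts to a one-line invocation of \Cref{thm:hyp refined}.
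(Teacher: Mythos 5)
Your proposal is correct and matches the paper's (implicit) argument exactly: the corollary is stated as a direct specialization of \Cref{thm:hyp refined} with $d_1=1$ and $d_2=\cdots=d_n=2$, giving $d_1\cdot(d_2-1)\cdots(d_n-1)=1$. Your additional check that the resulting start system \eqref{eq:bin hyp 2} degenerates to a system with a unique $\mathbb{C}^*$-solution is a sensible sanity check but not needed beyond the formula.
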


We remark that this is the first instance that the authors are aware of that gives a partial classification of polynomial programs with algebraic degree one. This is in contrast to the ML degree, where \cite{MR3103064} classifies very affine varieties with ML degree one. It is an interesting open question to give a complete classification of polynomial programs with algebraic degree one.

\begin{example}
Consider the optimization problem
\begin{align}\label{ex: opt}
    \min_{x_1,x_2 \in \mathbb{R}} \ u_1 x_1 + u_2 x_2 \quad \text{s.t.} \quad c_0 + c_1 x_1 + c_2 x_2 +c_3 x_2^2 = 0
\end{align}
where $u_1,u_2,c_0,c_1,c_2,c_3 \in \mathbb{R}$ are real valued parameters. By \Cref{cor:alg deg 1},  \eqref{ex: opt} has algebraic degree one, meaning the Lagrange system
\begin{align*}
    u_1 - \lambda c_1 &=0 \\
    u_2 - \lambda(c_2 + 2 c_3 x_2) &=0 \\
    c_0 + c_1 x_1 + c_2 x_2 +c_3 x_2^2 &= 0
\end{align*}
has one solution. This solution can then be expressed as a rational function of the problem data $u_1,u_2,c_0,c_1,c_2,c_3$. In this case, the unique solution is
\[x_1 = \frac{c_2^2 u_1^2 - 4c_0c_3u_1^2 - c_1^2u_2^2}{4c_1c_3u_1^2}, \quad x_2 = \frac{ c_1 u_2 - c_2u_1}{2c_3u_1}, \quad  \lambda = \frac{u_1}{c_1}. \]
\end{example}

Similarly, \Cref{thm:hyp refined} also gives a family of polynomial programs with algebraic degree zero.

\begin{corollary}
Consider the Lagrange system of \eqref{eq: hyp1} where $u$ and $f$ are generic and $\newt(f) = \conv \{0, e_1,\ldots,e_k,2e_{k+1},\ldots,2e_n \}$ for some $2 \leq k \leq n$. Then the algebraic degree of $(u,f)$ is zero.
\end{corollary}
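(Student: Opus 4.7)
The plan is to derive the corollary as an immediate boundary case of \Cref{thm:hyp refined}. Reading off the degree sequence from the hypothesis on $\newt(f)$ gives
\[
(d_1, d_2, \ldots, d_n) \;=\; (\underbrace{1, \ldots, 1}_{k}, \underbrace{2, \ldots, 2}_{n-k}),
\]
which is non-decreasing with $d_1 \geq 1$, so the hypotheses of \Cref{thm:hyp refined} are met. Substituting into the algebraic-degree formula $d_1 (d_2 - 1)(d_3 - 1) \cdots (d_n - 1)$, the factor $(d_2 - 1)$ equals $0$ because $k \geq 2$ forces $d_2 = 1$, and the whole product collapses to zero.

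For intuition, the same conclusion is transparent from the Lagrange system itself. After the usual reduction to the generic vertex representative $\hat f = c_0 + \sum_i c_i x_i^{d_i}$, the first $k$ components of $\mathcal{L}_{u, \hat f}$ read $\ell_i = u_i - \lambda c_i$ for $i \in \{1, \ldots, k\}$, since the $x_i$-partial of $\hat f$ reduces to the constant $c_i$ when $d_i = 1$. Each such equation pins $\lambda = u_i / c_i$, and since $k \geq 2$ and the parameters are generic, the values $u_1/c_1$ and $u_2/c_2$ already disagree, so no $(x, \lambda) \in \mathbb{C}^{n+1}$ satisfies even $\ell_1 = \ell_2 = 0$. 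Hence $\mathcal{L}_{u, \hat f}$, and a fortiori $\mathcal{L}_{u, f}$, has no zeros.

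There is essentially no technical obstacle. The only point worth flagging is that invoking \Cref{thm:hyp refined} in the boundary regime where some $d_i = 1$ is legitimate: the theorem's hypothesis permits $d_1 \geq 1$, exactly as already exploited in \Cref{cor:alg deg 1}, and the elementary direct argument above independently confirms that the algebraic degree really is zero when $k \geq 2$, so one does not need to worry about any degenerate behavior in the tropical step of the proof of \Cref{thm:hyp refined}.
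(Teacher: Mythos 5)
Your proposal is correct and matches the paper's (implicit) argument: the corollary is stated as an immediate consequence of \Cref{thm:hyp refined}, obtained exactly as you do by setting $d_1=\cdots=d_k=1$ and $d_{k+1}=\cdots=d_n=2$ so that the factor $(d_2-1)$ vanishes in the formula $d_1(d_2-1)\cdots(d_n-1)$. Your supplementary direct argument---that the equations $u_i-\lambda c_i=0$ for $i=1,2$ are already inconsistent for generic parameters---is a worthwhile addition, since the tropical manipulation in the proof of \Cref{thm:hyp refined} divides by $d_i-1$ and is therefore degenerate when some $d_i=1$.
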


\section{Numerical results}\label{sec: numerics}

\begin{table}
    \centering
    \begin{tabular}{c|c|c|c|c|c|c|c|c}
         $n$&  $20$ & $30$ & $40$ & $50$ & $60$ & $70$ & $80$ & $90$  \\
         Polyhedral &    $0.14$  & $0.51$ &  $1.01$ & $2.30$ & $4.49$ & NA & NA & NA \\
        $H$  &  $0.07$ & $0.20$  & $0.35$ & $0.87$ & $1.65$ & $2.54$ & $3.78$ & $6.45$ \\ 
    \end{tabular}
    \caption{Average time (sec) to find all critical points to \eqref{eq:opt} when $d = 2$ using standard polyhedral homotopy versus the homotopy, $H$, outlined in Theorem~\ref{thm:hyp}.}
    \label{tab1}
\end{table}

\begin{table}
    \centering
    \begin{tabular}{c|c|c|c|c|c|c|c}
         $n$& $6$ & $7$ & $8$ & $9$ & $10$ & $11$ & $12$  \\
         Polyhedral &  $0.29$ & $0.93$ & $3.06$ & $9.79$ & $27.42$ & $88.37$ & $556.92$ \\
        $H$ &  $0.21$ & $0.68$ & $2.29$ & $7.35$ & $20.35$ & $70.02$ & $395.64$\\ 
    \end{tabular}
    \caption{Average time (sec) to find all critical points to \eqref{eq:opt} when $d = 3$ using standard polyhedral homotopy versus the homotopy, $H$, outlined in Theorem~\ref{thm:hyp}.}
    \label{tab2}
\end{table}

\begin{table}
    \centering
    \begin{tabular}{c|c|c|c|c|c|c|c}
         $n$& $3$ & $4$ &$5$ & $6$ & $7$ & $8$ & $9$   \\
         Polyhedral & $0.03$ & $0.17$ & $1.16$ & $7.04$ & $40.11$ & $228.48$ & $1225.78$  \\
        $H$ & $0.03$ & $0.15$ & $0.83$ & $5.15$ & $34.79$ & $181.11$ & $1027.64$ \\ 
    \end{tabular}
    \caption{Average time (sec) to find all critical points to \eqref{eq:opt} when $d = 4$ using standard polyhedral homotopy versus the homotopy, $H$, outlined in Theorem~\ref{thm:hyp}.}
    \label{tab3}
\end{table}

We implement the homotopy in Theorem~\ref{thm:hyp} with start system \eqref{eq:bin hyp} using the path tracking function in \texttt{HomotopyContinuation.jl}. 
We compare our implementation of the homotopy outlined in Theorem~\ref{thm:hyp} against the polyhedral one in \texttt{HomotopyContinuation.jl} and give the time it takes to run each homotopy algoirthm in Table~\ref{tab1}, Table~\ref{tab2} and Table~\ref{tab3}. The computations are all run using a $2018$ Macbook Pro with 2.3 GHz Quad-Core Intel Core i5.

In all cases, our homotopy algorithm is much faster than the standard off the shelf software. When the hypersurface is of degree two, there are only two complex critical points. Despite this, standard polyhedral homotopy was unable to compute a start system when $n \geq 70$. In contrast, our specialized algorithm was able to find both critical points in a few seconds. We note that in this case, the Bezout bound of the corresponding polynomial system is $2^{n+1}$ where $n$ is the number of variables. When $n = 70$, the Bezout bound is $\approx 2.36 \times 10^{21}$, so it is unreasonable to expect that a total degree homotopy would work in this case.

Similarly, in Table~\ref{tab2} and Table~\ref{tab3} we see that when the degree of the hypersurface is three or four, our algorithm increasingly outperforms the state-of-the-art polyhedral homotopy software as the number of variables increases.

\section{Multiaffine optimization}\label{sec: multi}
 In this final section, we compute the algebraic degree of the following optimization problem:
\begin{align}
    \min_{x \in \mathbb{R}^n} \ g(x) \quad \text{subject to} \quad f(x) = 0 ,\label{eq:multiaffine}
\end{align}
where both $g$ and $f$ are multiaffine, meaning $\newt(f) = \newt(g) = \conv(\{0,1\}^n)$.

\begin{theorem}
The algebraic degree of \eqref{eq:multiaffine} is $!(n+1)$ i.e. the number of derangements 
of $\{0,1,\dots,n\}$. 
\end{theorem}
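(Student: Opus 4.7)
The plan is to compute the algebraic degree as a mixed volume of Newton polytopes and then identify that mixed volume combinatorially with the number of derangements.

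First I would identify the Newton polytopes of the Lagrange system $\mathcal{L}_{g,f} = \{\ell_1, \ldots, \ell_n, f\}$ in variables $(x_1, \ldots, x_n, \lambda)$. Because $g$ and $f$ are multiaffine, each partial derivative $\partial g / \partial x_i$ and $\partial f / \partial x_i$ is multiaffine in the remaining variables $x_1, \ldots, \widehat{x_i}, \ldots, x_n$. Writing $\lambda$ as the $(n+1)$-st coordinate, this gives
\[
\newt(\ell_i) = F_i := \{v \in [0,1]^{n+1} : v_i = 0\} \quad \text{for } i = 1, \ldots, n,
\]
while $\newt(f) = F_{n+1}$ is the facet on which the $\lambda$-coordinate vanishes. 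In short, the Newton polytopes of the $n+1$ equations in $\mathcal{L}_{g,f}$ are precisely the $n+1$ facets of the unit cube in $\mathbb{R}^{n+1}$ that pass through the origin.

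Next I would use the fact that each $F_i$ is a product of intervals ($\{0\}$ in coordinate $i$ and $[0,1]$ in every other coordinate) to compute the scaled Minkowski sum explicitly:
\[
\sum_{i=1}^{n+1} \mu_i F_i \;=\; \prod_{m=1}^{n+1} \Bigl[ 0,\; \sum_{i \neq m} \mu_i \Bigr],
\quad\text{hence}\quad
\vol\Bigl(\sum_{i=1}^{n+1} \mu_i F_i\Bigr) \;=\; \prod_{m=1}^{n+1} \Bigl(\sum_{i \neq m} \mu_i\Bigr).
\]
The mixed volume is the coefficient of $\mu_1 \cdots \mu_{n+1}$ in this product. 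Expanding, each contribution comes from selecting one index $\sigma(m) \neq m$ from the $m$-th factor; the resulting monomial equals $\mu_1 \cdots \mu_{n+1}$ exactly when $\sigma$ is a permutation of $[n+1]$ with no fixed point, i.e., a derangement. Thus $\mvol(F_1, \ldots, F_{n+1}) = {!(n+1)}$.

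To pass from mixed volume to algebraic degree I would invoke \Cref{thm:BKK}: the number of isolated $(\mathbb{C}^*)^{n+1}$-solutions of $\mathcal{L}_{g,f}$ is at most $!(n+1)$, with equality for generic coefficients. To ensure this BKK count equals the full algebraic degree one must rule out solutions on the coordinate hyperplanes. Specializing $x_i = 0$ reduces $\mathcal{L}_{g,f}$ to $n+1$ equations in $n$ remaining unknowns, while $\lambda = 0$ forces $\partial g/\partial x_i = 0$ for all $i$ together with $f = 0$, again an overdetermined system; for generic multiaffine $g, f$ both specializations are empty. The main obstacle I anticipate is making this genericity argument fully rigorous (or equivalently invoking the ``mixed volume equals algebraic degree'' principle from \cite{ourpaper}), after which the algebraic degree is identically the combinatorial count $!(n+1)$.
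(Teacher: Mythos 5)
Your proof is correct and follows essentially the same route as the paper: both identify the Newton polytopes of the Lagrange system with the $n+1$ origin-facets of the unit cube in $\mathbb{R}^{n+1}$, write the volume of the weighted Minkowski sum as the product $\prod_{m}\bigl(\sum_{i\neq m}\mu_i\bigr)$, extract the coefficient of $\mu_1\cdots\mu_{n+1}$, and rely on the BKK-exactness result of \cite{ourpaper} to equate this mixed volume with the algebraic degree. The only difference is cosmetic: you read the coefficient off directly as a count of fixed-point-free selections (derangements), whereas the paper expands by inclusion--exclusion to reach the alternating sum $\sum_{k}(-1)^k\binom{n+1}{k}(n+1-k)!$.
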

\begin{proof}
By \cite{ourpaper,rose2022multi} the Lagrange system corresponding to the optimization problem \eqref{eq:multiaffine} is BKK exact. Hence the algebraic degree of \eqref{eq:multiaffine} is equal to the normalized mixed volume of the Newton polytopes of Lagrange system $\mathcal{L}_{g,f} = \{\ell_1,\ldots, \ell_n, f\}$. We denote this value as $\mvol(\mathcal{L}_{g,f})$.

Let us denote by $I_j$ the unit interval $\conv(0, e_j)$ in the $j$-th coordinate direction in $\mathbb{R}^{n+1}$, then the Newton polytope $\newt(\ell_i)$ of $\ell_i$ is given by the Minkowski sum
$$
\newt(\ell_i)= I_0+ I_1 +\ldots + \hat I_i + \ldots + I_n = - I_i + \sum_{j=0}^n I_j.
$$
By definition, the mixed volume of the Newton polytopes of the Lagrange system $\mathcal{L}_{g,f} = \{\ell_1,\ldots, \ell_n, f\}$ is a coefficient in front of the monomial $\lambda_0\lambda_1\cdots\lambda_n$ in the polynomial expansion of 
\begin{equation*}
\begin{aligned}
    &\mathrm{Vol}(\lambda_0\newt(f)+\lambda_1\newt(\ell_1)+\ldots+\newt(\ell_n)) = \\
    & \mathrm{Vol}\left((\Lambda - \lambda_0)\cdot I_0 + (\Lambda - \lambda_1)\cdot I_1 + \ldots + (\Lambda - \lambda_n)\cdot I_n\right),
\end{aligned}
\end{equation*}
where $\Lambda=\sum_{i=0}^n \lambda_i$. A direct computation using multilinearity of mixed volume shows that 
\begin{equation*}
\begin{aligned}
&\mathrm{Vol}\left((\Lambda - \lambda_0)\cdot I_0 + (\Lambda - \lambda_1)\cdot I_1 + \ldots + (\Lambda - \lambda_n)\cdot I_n\right) =\\
& \lambda_0\lambda_1\ldots\lambda_n \sum_{K\subset \{0,\ldots,n\}} (-1)^{|K|}\cdot(n+1-|K|)!  + \text{ other terms.}
\end{aligned}
\end{equation*}
In total, we get the following expression for the mixed volume of the Lagrange system and hence for the algebraic degree of \eqref{eq:multiaffine}:
\begin{equation*}
\begin{aligned}
\mvol(\mathcal{L}_{g,f}) = & \sum_{k=0}^{n+1}(n+1-k)!\cdot(-1)^k\cdot\binom{n+1}{k} \\
= & \sum_{t=0}^{n+1} (t)!\cdot(-1)^{n+1-t}\cdot\binom{n+1}{t} = !(n+1). 
\end{aligned}
\end{equation*}
\end{proof}

\section{Conclusion}
In this paper we presented a homotopy continuation algorithm for finding all complex critical points to a class of polynomial optimization problems. For generic problem parameters, our algorithm is optimal in the sense that it tracks one path for each complex critical point. The main benefit of our work is that we explicitly construct a start system, circumventing the standard bottle neck  associated with polyhedral homotopy algorithms. This advantage was seen in our numerical results which showed that our algorithm was always faster than off-the-shelf homotopy continuation methods and it was able to find all complex critical points when other methods failed. Finally, we concluded by giving an explicit formula for the algebraic degree of a multiaffine polynomial optimization problem.

\bibliographystyle{plain}
\bibliography{refs.bib}

\end{document}